\theoremstyle{plain}
\newtheorem{theo}{Theorem}[section]
\newtheorem{lemm}[theo]{Lemma}
\theoremstyle{definition}
\newtheorem{defi}[theo]{Definition}
\DeclareMathAlphabet{\mathrmsl}{OT1}{cmr}{m}{sl}
\renewcommand{\leq}{\leqslant}
\renewcommand{\geq}{\geqslant}
\newcommand{\RM}{{\mathbb R}}
\DeclareMathOperator{\Ric}{Ric}
\newcommand{\vol}{\operatorname{vol}}
\newcommand{\Scal}{\operatorname{Scal}}
\newcommand{\Hess}{\operatorname{Hess}}
\newcommand{\tr}{\operatorname{tr}}
\newcommand{\proofof}[1]{\end{#1}\begin{proof}}
\newcounter{mnotecount}[section]
\renewcommand{\themnotecount}{\thesection.\arabic{mnotecount}}
\newcommand{\mnote}[1]%{}
{\protect{\stepcounter{mnotecount}}$^{\mbox{\footnotesize  $%\!\!\!\!\!\!\,
      \bullet$\themnotecount}}$ \marginpar{\raggedright\tiny\em
    $\!\!\!\!\!\!\,\bullet$\themnotecount: #1} }
\renewcommand{\footnote}[1]{${}^($\footnotemark${}^)$ \footnotetext{#1}}
\begin{document}
\title[Asymptotic invariants and the Ricci tensor]{Computing asymptotic invariants with the \\ Ricci tensor
on asymptotically flat and\\ asymptotically hyperbolic manifolds}
\author{Marc Herzlich} 
\subjclass{53B21, 53A55, 58J60, 83C30}
\keywords{Asymptotically flat manifolds, asymptotically hyperbolic manifolds, mass, center of mass.}
\address{Universit\'e de Montpellier\\ Institut montpelli\'erain Alexander Grothendieck\\ 
UMR 5149 CNRS -- UM\\ Montpellier\\ France} 
\email{marc.herzlich@umontpellier.fr}
\date{\today}
\thanks{The author is supported in part by the project ANR-12-BS01-004 `Geometry and topology of open manifolds' of the French National Agency for Research.}
\begin{abstract} 
We prove in a simple and coordinate-free way the equivalence between the classical definitions of the mass or of the center
of mass of an asymptotically flat manifold and their alternative definitions depending on the Ricci tensor and conformal
Killing fields. This enables us to prove an analogous statement in the asymptotically hyperbolic case.
\end{abstract}

\maketitle

\section*{Introduction}

Mass is the most fundamental invariant of asymptotically flat manifolds. Originally defined in
General Relativity, it has since played an important role in Riemannian geometric issues. 
Other interesting invariants, still motivated by physics, include the energy momentum, the angular momentum, 
and the center of mass (which will be of interest in this note).
Moreover, they have been extended to other types of asymptotic behaviours such as asymptotically hyperbolic manifolds. 

Two difficuties occur when handling the mass of an asymptotically flat or hyperbolic manifold (or any of its companion
invariants): it is defined as a limit of an integral expression on larger and larger spheres, and
it depends on the first derivatives of the metric tensor written in a special chart where the metric coefficients are 
asymptotic to those of the model (flat, hyperbolic) metric at infinity.

It seems unavoidable that a limiting process is involved in the definitions. But
finding expressions that do not depend on the first derivatives but on rather more geometric quantities is an old 
question that has attracted the attention of many authors. It was suggested by A. Ashtekhar and R. O. Hansen 
\cite{ashtekar-hansen_unified} (see also P. Chru\'sciel \cite{chrusciel_rk-positive-energy}) 
that the mass could be rather defined from the Ricci tensor and a conformal Killing field of 
the Euclidean space. Equality between the two definitions, as well as a similar identity for the center of mass, has 
then been proved rigorously by L.-H. Huang using a density theorem \cite{huang_conf-chinoise}, \emph{cf}.
previous work by J. Corvino and H. Wu \cite{corvino-wu} for conformally flat manifolds, 
and by P. Miao and L.-F.~Tam \cite{miao-tam} through a direct computation in coordinates.

The goal of this short note is twofold: we shall provide first a simple proof of the equality between the classical definitions of the asymptotic
invariants and their alternative definitions using the Ricci tensor. Although similar in spirit to Miao-Tam \cite{miao-tam}, our approach completely avoids computations in
coordinates. Moreover, it clearly explains why the equality should hold, by connecting it to a natural
integration by parts formula related to the contracted Bianchi identity. A nice corollary of our proof is that it can be naturally extended to other settings where asymptotic invariants have been defined. As an example of this feature, we  
provide an analogue of our results in the asymptotically hyperbolic setting.

\smallskip

\section{Basic facts}

We begin by recalling the classical definitions of the mass and the center of mass of an asymptotically flat 
manifold, together with their alternative definitions involving the Ricci tensor. In all that follows, the dimension $n$ 
of the manifolds considered will be taken to be at least $3$. We shall restrict ourselves to manifolds with only one end, but the definitions can be straightforwardly extended to the general case.

\begin{defi}\label{defn1.1} 
An asymptotically flat manifold is a complete Riemannian manifold $(M,g)$ such that there exists a diffeomorphism $\Phi$ 
(called a chart at infinity) from the complement of a compact set in $M$ into the complement of a ball in $\RM^n$, 
such that, in these coordinates and for some $\tau>0$,
$$ |g_{ij} - \delta_{ij} | = O(r^{-\tau}), \quad |\partial_kg_{ij}| = O(r^{-\tau-1}), 
\quad |\partial_k\partial_{\ell}g_{ij}| = O(r^{-\tau-2}), $$
where $r=|x|$ is the Euclidean radius in $\RM^n$.
\end{defi}

\begin{defi}\label{defn1.2} 
If $\tau>\tfrac{n-2}{2}$ and the scalar curvature of $g$ is integrable, the quantity
\begin{equation}\label{eq_defn1.2}
m(g) = \frac{1}{2(n-1)\omega_{n-1}}\, \lim_{r\to\infty} \int_{S_r} (-\delta^e g - d \tr_e g)(\nu^e)\,d\!\vol^e_{s_r} 
\end{equation}
exists (where $e$ refers to the Euclidean metric in the given chart at infinity, $\delta$ is the divergence defined as the
adjoint of the exterior derivative, $\nu^e$ denotes the field of Euclidean outer unit normals to the coordinate spheres $S_r$,
and $\omega_{n-1}$ is the volume of the unit round sphere of
dimension $n-1$) and is independent of the chart chosen around infinity. It is called the \emph{mass} 
of the asymptotically flat manifold $(M,g)$.
\end{defi}

\begin{defi}\label{defn1.3} 
If $\tau>\tfrac{n-2}{2}$, the scalar curvature $\Scal^g$ of $g$ is integrable, $m(g)\neq 0$, and the following so-called
\emph{Regge-Teitelboim (RT) conditions} are satisfied:
$$ |g_{ij}^{\textrm{odd}} | = O(r^{-\tau-1}), \quad |\partial_k\left(g_{ij}^{\textrm{odd}}\right)| = O(r^{-\tau-2}), \quad
\left(\Scal^g\right)^{\textrm{odd}} = O(r^{-2\tau-2}) $$ 
(where $\cdot^{\textrm{odd}}$ denotes the odd part of a function on the chart at infinity),
the quantity
\begin{equation*}%\label{eq_defn1.3}
\begin{split}
c^{\alpha}(g) \ = \ & \frac{1}{2(n-1)\omega_{n-1}m(g)}\, \lim_{r\to\infty} \int_{S_r} 
\bigl[ \, x^{\alpha}(-\delta^e g - d \tr_e g) \\ 
& \ \ \ \ \ \ \ \ \ \ \ \ \ \ \ \ \ \ \ \ \ \ \ \ \ \ \ \ \ \ \ \ \ \ \ \ \ \ \ \ \ 
- (g-e)(\partial_\alpha,\cdot) + \tr_e (g-e)\, dx^{\alpha}\,\bigr] (\nu)\,d\!\vol^e_{s_r} 
\end{split}
\end{equation*}
exists for each $\alpha$ in $\{1,...,n\}$. Moreover, the vector $\mathbf{C}(g) = (c^1(g),\dots,c^n(g))$ is independent 
of the chart chosen around infinity, up to the action of rigid Euclidean isometries. It is called the \emph{center of 
mass} of the asymptotically flat manifold $(M,g)$.
\end{defi}

The normalization factors may seem somewhat arbitrary in the previous two definitions: they however show up naturally if one wants these invariants to be equal to the usual parameters of the standard spacelike slices of the Schwarzschild metrics (in any dimension).

Existence and invariance of the mass have been proved by R. Bartnik \cite{ba} and P. T. Chru{\'s}ciel \cite{chrusciel-mass}. 
The center of mass has been introduced by T. Regge and C. Teitelboim \cite{regge-teitelboim_surface-integrals,regge-teitelboim_improved},
and R. Beig and N. \'O Murchadha \cite{bom}, see also the more recent works of J. Corvino and R. Schoen 
\cite{corvino_scalar-curvature,corvino-schoen}. 

We shall recall here the approach towards existence and well-definedness of these invariants due to B. Michel 
\cite{michel_geometric-invariance}. Let $g$ and $b$ be two metrics on a complete manifold $M$, the latter one being considered as a 
\emph{background} metric, hence the notation. Let also $\mathcal{F}^g$ (resp. $\mathcal{F}^b$) be a (scalar) polynomial invariant in the curvature 
tensor and its subsequent derivatives, $V$ be a function, and $(M_r)_{r\geq 0}$ be an exhaustion of $M$ by compact subsets, whose boundaries will 
be denoted by $S_r$ (later taken as large coordinate spheres in a chart at infinity). One then may compute:
\begin{equation*}
\int_{M_r} V\, \left(\mathcal{F}^g - \mathcal{F}^b\right)  \, d\!\vol^b 
\ = \ \int_{M_r} V\, (D\mathcal{F})_b(g-b)\, d\!\vol^b \ + \ \int_{M_r} V \, \mathcal{Q}(b,g) \, d\!\vol^b 
\end{equation*}
where $\mathcal{Q}$ denotes the (quadratic) remainder term in the Taylor formula for the functional $\mathcal{F}$.
Integrating the linear term by parts leads to:
\begin{equation}\label{eqn.michel}
\begin{split}
\int_{M_r} \!V\left(\mathcal{F}^g - \mathcal{F}^b\right) d\!\vol^b  
\, = & \, \int_{M_r} \!\langle (D\mathcal{F})_b^*V\, ,\, g-b\rangle \, d\!\vol^b \, + \, \int_{S_r} \!\mathbb{U}(V,g,b) \\
& \ \ \ \ \ \ \ \ \ + \, \int_{M_r} \!\!V\,\mathcal{Q}(b,g) \, d\!\vol^b  
\end{split}
\end{equation}
(where we include here the volume element in the definition of $\mathbb{U}$).
This formula shows that 
$$\mathcal{H}_{\mathcal{F}}(V,g,b) = \lim\limits_{r\to\infty} \int_{S_r} \mathbb{U}(V,g,b)$$ exists if the following two natural conditions are satisfied:
\begin{enumerate}
\item[(1)] the metric $g$ is asymptotic to $b$ so that $V\, \left(\mathcal{F}^g - \mathcal{F}^b\right)$ and $V\, \mathcal{Q}(b,g)$ are integrable;
\item[(2)] $V$ belongs to the kernel of $(D\mathcal{F})_b^*$ (the adjoint of the 
first variation operator of the Riemannian functional $\mathcal{F}$). 
\end{enumerate}
Moreover, Michel proves in \cite{michel_geometric-invariance}
that $\mathcal{H}_{\mathcal{F}}(V,g,b)$ is an asymptotic invariant, independent of the choice of chart at infinity, if
\begin{enumerate}
\item[(3)] the background geometry $b$ is rigid enough, in the sense that any two `charts at infinity' where $g$ is asymptotic to $b$ differ by a
diffeomorphism whose leading term is an isometry of $b$;
\item[(4)] $\mathcal{F}^b$ is a constant function.
\end{enumerate}
This last result is a consequence of the diffeomorphism invariance 
of the integrated scalar invariant $\mathcal{F}^g$.
 
If one chooses $\mathcal{F}^g = \Scal^g$ on an asymptotically flat manifold (hence $b=e$, the Euclidean metric), one has
$$ (D\Scal)_e^*V \, = \, \Hess^e V \, + \, (\Delta^eV)\,e, $$ 
where $\Hess^e$ denotes the Hessian of a function and $\Delta^e$ is the Euclidean Laplace operator (defined here as the opposite of the trace of the Hessian, so that it has non-negative spectrum).
Its kernel then consists of affine functions. We now let $V\equiv 1$. 
The scalar curvature of $g$ is integrable and $\tau>\tfrac{n-2}{2}$ in
Definition \ref{defn1.1}, hence
\begin{equation}\label{eqn1.3bis}
\mathcal{H}_{\Scal}(1,g,e) \ = \ \lim_{r\to\infty} \int_{M_r} \Scal^g \, d\!\vol^e \ 
- \ \lim_{r\to\infty} \int_{M_r}  \mathcal{Q}(e,g) \, d\!\vol^e  
\end{equation}
makes sense since integrability of $\Scal^g$ yields convergence of the first term, whereas the integrand in the second term is a combination of terms in $(g-b)\partial^2g$ and $g^{-1}(\partial g)^2$ which are integrable due to the value of $\tau$.
Moreover, an easy computation shows that
$$ \mathcal{H}_{\Scal}(1,g,e) \ = \ \lim_{r\to\infty} \int_{S_r} \mathbb{U}(1,g,e) \ = \ 2(n-1)\omega_{n-1}\, m(g)  $$
where $m(g)$ is the classical definition of the mass given in Definition \ref{defn1.2}.
Moreover, Michel's analysis recalled above \cite{michel_geometric-invariance} shows that the mass is an asymptotic invariant, independent of the choice of chart at infinity, since Euclidean geometry is a rigid background geometry \cite{ba,chrusciel-mass} and $\Scal^e \equiv 0$.

If one takes $V=V^{(\alpha)}=x^{\alpha}$ (the $\alpha$-th coordinate function in the chart at infinity, for any 
$\alpha$ in $\{1,...,n\}$), the same procedure now yields the classical definition of the $\alpha$-th coordinate of the center of mass, \emph{i.e.}
$$ \mathcal{H}_{\Scal}(V^{(\alpha)},g,e) \ = \ \lim_{r\to\infty} \int_{S_r} \mathbb{U}(V^{(\alpha)},g,e) \ = \ 2(n-1)\omega_{n-1}\, m(g)\, c^{\alpha}(g) $$
for any $\alpha \in \{1,...,n\}$. 
Under the RT conditions, these converge as well and the vector $\mathbf{C}(g)$ is again an 
asymptotic invariant.

We now recall the alternative definitions of these asymptotic invariants \emph{via} the Ricci tensor, following the suggestions of A. Ashtekhar  and R. O Hansen, P.~Chru\'sciel, etc.:

\begin{defi}\label{defn1.4} 
Let $X$ be the radial vector field $X = r\partial_r$ in the chosen chart at infinity. Then we define 
\emph{the Ricci version of the mass} of $(M,g)$ by
\begin{equation}\label{eqn1.4}
m_R(g) \ = \ - \frac{1}{(n-1)(n-2)\omega_{n-1}}\, 
\lim_{r\to\infty} \int_{S_r} \left(\Ric^g - \frac12\Scal^g g\right)(X,\nu)\, d\!\vol^g
\end{equation}
whenever this limit is convergent. For $\alpha$ in $\{1,\dots,n\}$, let $X^{(\alpha)}$ be the Euclidean conformal Killing field 
$X^{(\alpha)} = r^2\partial_{\alpha} - 2 x^{\alpha}x^i\partial_i$ and define
\emph{the Ricci version of the center of mass}:
\begin{equation}\label{eqn1.4bis}
c^{\alpha}_R(g) \ = \ \frac{1}{2(n-1)(n-2)\omega_{n-1}m(g)}\, 
\lim_{r\to\infty} \int_{S_r} \left(\Ric^g - \frac12\Scal^g g\right)(X^{(\alpha)},\nu)\, d\!\vol^g
\end{equation}
whenever this limit is convergent. We will call this vector $\mathbf{C}_R(g)=(c^{1}_R(g),\dots,c^n_R(g))$.
\end{defi}

Notice that these definitions of the asymptotic invariants rely on the Einstein tensor,
which seems to be consistent with the physical motivation.

\section{Equality in the asymptotically flat case}

In this section, we will prove the equality between the classical expressions $m(g)$ and $\mathbf{C}(g)$ of the mass and the
center of mass and their Ricci versions $m_R(g)$ and $\mathbf{C}_R(g)$. The proof we will give relies on Michel's approach
described above together with two elementary computations in Riemannian geometry.

\begin{lemm}[The integrated Bianchi identity]\label{lemma2.1}
Let $h$ be a $C^3$ Riemannian metric on a smooth compact domain with boundary $\Omega$ and $X$ be a conformal Killing field.
Then 
$$ \int_{\partial \Omega} \left(\Ric^h - \frac12\Scal^h h\right)(X,\nu)\, d\!\vol^h_{\partial \Omega} 
\ =  \ \frac{n-2}{2n} \int_{\Omega} \Scal^h \left(\delta^h X\right)\, d\!\vol^h_{\Omega} \  ,$$
where $\nu$ is the outer unit normal to $\partial \Omega$.
\end{lemm}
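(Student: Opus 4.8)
The plan is to read the left-hand side as the boundary term produced by the divergence theorem applied to a single well-chosen vector field, so that the entire identity reduces to a pointwise computation of one divergence. Writing $G = \Ric^h - \frac12\Scal^h h$ for the Einstein tensor, I would consider the $1$-form $\omega = G(X,\cdot)$ and its metric dual $\omega^\sharp$. Since $\omega(\nu) = G(X,\nu)$, the divergence theorem on $\Omega$ immediately gives
\[
\int_{\partial\Omega} \left(\Ric^h - \tfrac12\Scal^h h\right)(X,\nu)\, d\!\vol^h_{\partial\Omega} \ = \ \int_{\Omega} \diver^h(\omega^\sharp)\, d\!\vol^h_{\Omega},
\]
so it suffices to show that $\diver^h(\omega^\sharp)$ equals the integrand $\frac{n-2}{2n}\Scal^h(\delta^h X)$ at every point.

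Next I would expand this divergence by the product rule: for any symmetric $(0,2)$-tensor $T$ and vector field $X$,
\[
\diver^h\!\big(T(X,\cdot)^\sharp\big) \ = \ (\diver^h T)(X) \ + \ \langle T, \nabla X\rangle.
\]
Specializing to $T=G$, the first term vanishes \emph{identically} thanks to the twice-contracted (second) Bianchi identity, which is exactly the statement that the Einstein tensor is divergence-free, $\diver^h G = 0$; this is the conceptual reason the formula holds and the reason the lemma is named as it is. For the second term, the symmetry of $G$ allows me to replace $\nabla X$ by its symmetric part, and since $(\mathcal{L}_X h)_{ij} = \nabla_i X_j + \nabla_j X_i$ is twice that symmetric part, I would rewrite $\langle G, \nabla X\rangle = \tfrac12\langle G, \mathcal{L}_X h\rangle$.

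Finally I would bring in the conformal Killing hypothesis. Tracing $\mathcal{L}_X h = 2\phi\, h$ identifies the conformal factor as $\phi = \tfrac1n\diver^h X = -\tfrac1n\,\delta^h X$, whence $\mathcal{L}_X h = -\tfrac{2}{n}(\delta^h X)\,h$. Substituting, and using that the trace of the Einstein tensor is $\langle G, h\rangle = \Tr_h G = \Scal^h - \tfrac{n}{2}\Scal^h = -\tfrac{n-2}{2}\Scal^h$, I would obtain
\[
\diver^h(\omega^\sharp) \ = \ \tfrac12\left\langle G,\ -\tfrac{2}{n}(\delta^h X)\,h\right\rangle \ = \ -\tfrac1n(\delta^h X)\,\Tr_h G \ = \ \tfrac{n-2}{2n}\,\Scal^h\,(\delta^h X),
\]
which is precisely the desired integrand, and integrating over $\Omega$ closes the argument. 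There is no analytic difficulty here, since the identity is purely local once the contracted Bianchi identity is in hand; the only genuine care required is sign bookkeeping — relating the codifferential $\delta^h$ on $1$-forms to minus the divergence, pinning down the conformal factor as $\tfrac1n\diver^h X$, and computing $\Tr_h G = -\tfrac{n-2}{2}\Scal^h$ so that exactly the constant $\tfrac{n-2}{2n}$ survives.
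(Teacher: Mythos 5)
Your proof is correct and is essentially the paper's own argument: the paper writes the same computation as the integration-by-parts identity $\int_{\partial\Omega} G(X,\nu) = \int_\Omega \langle G, (\delta^h)^*X\rangle$ (valid because $\delta^h G=0$ by the contracted Bianchi identity), which is exactly your divergence theorem plus product rule step, followed by the same substitution $(\delta^h)^*X = -\tfrac1n(\delta^hX)h$ and trace of the Einstein tensor. The sign and constant bookkeeping in your version checks out.
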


\begin{proof} This equality is a variation of the well known \emph{Pohozaev identity} in conformal geometry, as
stated by R. Schoen \cite{schoen_ricci-mass}. Our version has the advantage that the divergence of $X$ appears in
the bulk integral (the classical Pohozaev identity is rather concerned with the derivative of the scalar curvature
in the direction of $X$).The proof being very simple, we will 
give it here. From the contracted Bianchi identity $\delta^h\left(\Ric^h - \tfrac12\Scal^h h\right)=0$, 
one deduces that
$$
\int_{\partial \Omega} \left(\Ric^h - \frac12\Scal^h h\right)(X,\nu)\, d\!\vol^h_{\partial \Omega} \ = \  
\int_{\Omega} \langle \Ric^h - \frac12\Scal^h h,(\delta^h)^*X\rangle_h \, d\!\vol^h_{\Omega} 
$$
where $(\delta^h)^*$ in the above computation denotes the adjoint of the divergence on vectors, \emph{i.e.} the symmetric 
part of the covariant derivative. Since $X$ is conformal Killing, $(\delta^h)^*X = -\tfrac1n(\delta^hX)h$ and
\begin{align*} \int_{\partial \Omega} \left(\Ric^h - \frac12\Scal^h h\right)(X,\nu)\, d\!\vol^h_{\partial \Omega} 
& \ = \ - \frac1n \int_{\Omega} \tr_h\left( \Ric^h - \frac12\Scal^h h\right)(\delta^h X) \, d\!\vol^h_{\Omega} \\
& \ = \ \frac{n-2}{2n} \int_{\Omega} \Scal^h (\delta^hX) \, d\!\vol^h_{\Omega}
\end{align*} 
and this concludes the proof. \end{proof}

Lemma \ref{lemma2.1} provides a link between the integral expression appearing in the Ricci definition
of the asymptotic invariants (see \eqref{eqn1.4}) and the bulk integral 
$$\int_{\Omega} \Scal^h \left(\delta^h X\right)\, d\!\vol^h_{\Omega}\, . $$ 
This latter
quantity also looks like the one used by Michel to derive the definitions of the asymptotic invariants, provided that
some connection can be made between divergences of conformal Killing fields and elements in the kernel of the adjoint of
the linearized scalar curvature operator. Such a connection stems from our second lemma:

\begin{lemm}\label{lemma2.2}
Let $h$ be a $C^3$ Riemannian metric and $X$ a conformal Killing field. If $h$ is Einstein with Einstein constant 
$\lambda(n-1)$, then $V=\delta^hX$ sits in the kernel of $(D\Scal)_h^*$. More precisely:
\begin{equation}
\label{eqlemm2.2}
\Hess^h V \ = \ - \lambda\, V\, h .
\end{equation}
\end{lemm}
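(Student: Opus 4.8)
The plan is to establish the pointwise identity $\Hess^h \delta^h X = -\lambda\,(\delta^h X)\,h$ by differentiating the conformal Killing equation and using the Einstein condition to convert curvature terms into a multiple of the metric. The key structural fact is that for a conformal Killing field, the symmetric part of $\nabla X$ is a pure trace, namely $(\delta^h)^*X = -\tfrac{1}{n}(\delta^h X)\,h$, so that the whole covariant derivative $\nabla X$ decouples into this conformal part plus an antisymmetric part. I would write $f = \delta^h X$ for brevity and aim to compute $\nabla^2 f$ directly.

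First I would recall that conformal Killing means $\nabla_i X_j + \nabla_j X_i = -\tfrac{2}{n} f\, h_{ij}$, where the sign convention matches $(\delta^h)^* X = -\tfrac{1}{n} f\, h$ from the previous lemma. Taking a further covariant derivative and antisymmetrizing appropriately lets one solve for the second derivatives of $X$ in terms of first derivatives of $f$ and a curvature term; this is the standard prolongation of the conformal Killing operator. Concretely, the relation one expects is of the form $\nabla_i \nabla_j X_k = -\tfrac{1}{n}\bigl(h_{jk}\nabla_i f + h_{ik}\nabla_j f - h_{ij}\nabla_k f\bigr) + (\text{curvature})\cdot X$. Contracting indices to recover $\nabla^2 f$ then produces a Ricci contraction acting on $X$, and this is precisely where the Einstein hypothesis enters: substituting $\Ric^h = \lambda(n-1)\,h$ collapses the curvature term into $-\lambda\,f\,h$, giving the claimed formula.

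The main obstacle will be getting the curvature bookkeeping right. The prolongation of the conformal Killing equation involves the full Riemann tensor, and only after the trace producing $\nabla^2 f$ does everything reduce to the Ricci tensor; so I would need to verify carefully that all genuinely Weyl-type contributions cancel and that the surviving term is exactly a Ricci contraction on $X$, not some other combination. The cleanest route is to take the divergence of the conformal Killing equation to get an expression for $\nabla f$ in terms of $\Delta X$ and $\Ric(X,\cdot)$ (via the Bochner/Weitzenböck commutator $[\nabla_i,\nabla_j]X^k$), and then differentiate once more, again commuting derivatives and feeding in the Einstein condition. I would be careful with the normalization of $\delta^h$ and the sign of the rough Laplacian, since a single sign slip there would propagate into a wrong constant in front of $\lambda$. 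Once the curvature terms are correctly identified as a multiple of $\Ric(X,\cdot)$, substituting $\lambda(n-1)h$ and simplifying should yield \eqref{eqlemm2.2} directly, with the factor $(n-1)$ from the Einstein constant combining with the contraction to leave a clean $-\lambda$.
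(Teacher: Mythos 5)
Your plan is sound and the identity you are after is the classical Yano--Obata equation for the conformal factor of a conformal Killing field on an Einstein manifold, so the route via the prolongation of the conformal Killing equation does work; but it is genuinely different from the paper's argument. The paper avoids all index gymnastics: it lets $\phi_t$ be the (local) flow of $X$, observes that $\phi_t^*h=e^{2u_t}h$ is again Einstein with the same constant, plugs this into the known conformal transformation law for the Ricci tensor (Besse 1.159(d)), and differentiates at $t=0$ using $\delta^hX=-n\dot u$; a trace then gives $\Delta^h\dot u=n\lambda\dot u$ and substituting back yields $\Hess^h\dot u=-\lambda\dot u\,h$. What that buys is that the entire curvature bookkeeping you worry about is outsourced to a single quoted formula. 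Your approach buys self-containedness, but be aware that the difficulty is slightly mislocated in your description: when you differentiate the prolongation identity $\nabla_i\nabla_jX_k=(\text{curvature})\cdot X+(\text{terms in }\nabla f)$ once more and commute derivatives, you pick up terms in $\nabla\mathrm{Riem}$ contracted with $X$, not just algebraic Weyl pieces; these do not cancel "upon tracing" by symmetry alone, but become $\nabla\Ric$ terms via the second Bianchi identity and then vanish because $\Ric^h=\lambda(n-1)h$ is parallel. Equivalently, the intermediate identity you should aim for is $\mathcal{L}_X\Ric^h=-(n-2)\Hess^h\dot u-(\Delta^h\dot u)h$ (with $\dot u=-\tfrac1n\delta^hX$), which is exactly the linearized statement the paper extracts from the flow; once you have it, setting $\mathcal{L}_X\Ric^h=\lambda(n-1)\mathcal{L}_Xh=2\lambda(n-1)\dot u\,h$ and tracing finishes the proof as in the paper. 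So the proposal is viable, but to be complete it must explicitly handle the derivative-of-curvature terms via Bianchi rather than hoping they disappear in the contraction.
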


\begin{proof} Recall that $(D\Scal)_h^* V = \Hess^h V + (\Delta^hV)h - V\Ric^h$ \cite[1.159(e)]{besse}, so that 
its kernel is precisely the set of solutions of \eqref{eqlemm2.2}  if $\Ric^h = \lambda(n-1)h$.

Let $\phi_t$ be the (local) flow of $X$, which acts by conformal diffeomorphisms, and $e^{2u_t}$ the conformal
factor at time $t\geq 0$, with $u_0=0$. Hence 
$$\Ric^{\phi_t^*h} = \lambda(n-1)\,\phi_t^*h,$$
an equality which can be written equivalently as 
$$\Ric^{e^{2u_t}h} = \lambda(n-1)\,e^{2u_t}h$$
since $\phi_t$ is conformal. From \cite[1.159(d)]{besse},
$$
Ric^{e^{2u_t}h} \ = \ Ric^{h} - (n-2)\left(\Hess^h u_t - du_t\otimes du_t\right) 
                          \ + \ \left(\Delta^h u_t - (n-2)\,|du_t|^2_h\right)\, h ,
$$
from which one deduces that 
$$  - (n-2)\left(\Hess^h u_t - du_t\otimes du_t\right) \ + \ \left(\Delta^h u_t - (n-2)|du_t|^2_h\right)\, h 
\ = \ \lambda(n-1)\,\left(e^{2u_t} - 1\right) h . $$
We now differentiate at $t=0$. Denoting by $\dot{u}$ the first variation of $u_t$, the conformal Killing equation yields the following relation between $X$ and $\dot{u}$:
$\delta^hX = -n\,\dot{u}$. Taking into account that $u_0=0$, one gets:
\begin{equation}\label{eqnpreuvelemm2.2} 
- (n-2)\Hess^h \dot{u} + (\Delta^h \dot{u})\, h  \ = \ 2(n-1)\lambda\,\dot{u}\, h
\end{equation}
(note that this equation can also be obtained directly from the conformal Killing equation
on $X$ but we prefer the proof above as it underlines the relation with the variations of
the Ricci curvature under conformal deformations).
Tracing this identity yields $2(n-1)\,\Delta^h \dot{u} = 2n(n-1)\,\lambda\,\dot{u}$, so that 
$\Delta^h \dot{u} = n\lambda\,\dot{u}$. Inserting this in Equation~\eqref{eqnpreuvelemm2.2} leads to
$\Hess^h \dot{u} = - \lambda\,\dot{u}\, h$, which is the desired expression.
\end{proof}

We now have all the necessary elements to prove the equality between the classical expressions of the 
asymptotic invariants and their Ricci versions in the asymptotically flat case. 

\begin{theo}%[Equality between both definitions in the asymptotically flat case]
If $(M,g)$ is a $C^3$ asymptotically flat manifold with integrable scalar curvature and decay rate $\tau > \tfrac{n-2}{2}$, 
then the classical and Ricci definitions of the mass agree: $m (g) \, = \, m_R(g)$.
If $m(g)\neq 0$ and the RT asymptotic conditions are moreover assumed, the same holds for the center of mass, \emph{i.e.}
$c^{\alpha}(g) \, = \, c^{\alpha}_R(g)$ for any $\alpha \in \{1,...,n\}$.
\end{theo}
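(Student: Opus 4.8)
The plan is to view each claimed equality as an identity between two limits of boundary integrals over the coordinate spheres $S_r$, to rewrite each side as a convergent bulk integral, and then to check that the two bulk integrals share the same leading term while differing only by quantities quadratic in the decaying data $(g-e,\partial(g-e))$, which vanish in the limit under the hypothesis $\tau>\tfrac{n-2}{2}$. On the classical side I would feed Michel's integration by parts formula with $\mathcal{F}=\Scal$ and $b=e$, choosing as weight a function $V$ lying in $\ker(D\Scal)_e^*$. Lemma \ref{lemma2.2}, applied to the Ricci-flat (hence Einstein) background $e$, is exactly what certifies that the divergences of the relevant Euclidean conformal Killing fields belong to this kernel: one computes $\delta^e X=-n$ (a constant) for the mass and $\delta^e X^{(\alpha)}=2n\,x^{\alpha}$ (an affine function) for the center of mass, and both are annihilated by $(D\Scal)_e^*$. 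With such a $V$ the bulk term $\int\langle (D\Scal)_e^*V,g-e\rangle$ vanishes and, since $\Scal^e=0$, the classical invariants become $\lim_r\big[\int_{M_r}V\Scal^g\,d\!\vol^e-\int_{M_r}V\,\mathcal{Q}(e,g)\,d\!\vol^e\big]$.

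On the Ricci side I would rerun the computation in the proof of Lemma \ref{lemma2.1} with $h=g$, but keeping $X$ only \emph{asymptotically} conformal Killing. The contracted Bianchi identity $\delta^g(\Ric^g-\tfrac12\Scal^g g)=0$ still yields, over the coordinate annulus $\Omega_r=\{r_0\le |x|\le r\}$,
\[
\int_{S_r}\!\Big(\Ric^g-\tfrac12\Scal^g g\Big)(X,\nu)\,d\!\vol^g
=\int_{S_{r_0}}\!\Big(\Ric^g-\tfrac12\Scal^g g\Big)(X,\nu)\,d\!\vol^g
+\int_{\Omega_r}\big\langle \Ric^g-\tfrac12\Scal^g g,\,(\delta^g)^*X\big\rangle_g\,d\!\vol^g .
\]
Splitting $(\delta^g)^*X=\tfrac12\mathcal{L}_X g$ into its $g$-trace and $g$-tracefree parts turns the bulk integrand into $\tfrac{n-2}{2n}(\delta^g X)\Scal^g+\langle \Ric^g-\tfrac1n\Scal^g g,\,W\rangle_g$, where $W$ is the tracefree remainder that vanishes precisely when $X$ is conformal Killing for $g$. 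Because $X$ is \emph{exactly} conformal Killing for $e$, the tensor $W$ is built only from $g-e$ and $\partial(g-e)$, whence $W=O(r^{-\tau})$; combined with $\Ric^g-\tfrac1n\Scal^g g=O(r^{-\tau-2})$ and $\delta^g X=\delta^e X+O(r^{-\tau})$, this isolates $\tfrac{n-2}{2n}(\delta^e X)\Scal^g$ as the leading part of the integrand, i.e.\ the same scalar-curvature term (up to the prescribed normalizing constants) that governs the classical side.

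It then remains to compare. After subtracting the common $\Scal^g$ term, the difference $m(g)-m_R(g)$ is a limit of boundary integrals over $S_r$ whose integrand is purely quadratic in $(g-e,\partial(g-e))$: it collects Michel's remainder $\mathcal{Q}(e,g)$, the volume discrepancy $\Scal^g(d\!\vol^g-d\!\vol^e)$, the term $(\delta^g X-\delta^e X)\Scal^g$, and $\langle \Ric^g-\tfrac1n\Scal^g g,\,W\rangle_g$. For the mass such an integrand is $O(r^{-2\tau-1})$, so its integral over $S_r$ is $O(r^{\,n-2-2\tau})$, which tends to $0$ exactly because $\tau>\tfrac{n-2}{2}$; hence $m(g)=m_R(g)$. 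The step I expect to be the genuine obstacle is making this last comparison airtight — one must verify that the \emph{linear} first- and second-derivative parts of the classical and Ricci integrands really do cancel, leaving only quadratic remainders, rather than merely estimating orders. This is precisely where the conformal Killing structure of $X$ for $e$ together with the Bianchi identity (Lemmas \ref{lemma2.1} and \ref{lemma2.2}) must do the essential work, in place of the coordinate bookkeeping of Miao--Tam.

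For the center of mass the same scheme applies with $V^{(\alpha)}=x^{\alpha}$ and $X^{(\alpha)}$, using $\delta^e X^{(\alpha)}=2n\,x^{\alpha}\in\ker(D\Scal)_e^*$. The additional difficulty is that the weight $V^{(\alpha)}$ grows like $r$, so the quadratic remainders gain a factor of $r$ and become only borderline integrable under $\tau>\tfrac{n-2}{2}$. Here the Regge--Teitelboim conditions are indispensable: their faster decay of the odd part supplies exactly the parity cancellation that renders these weighted boundary integrals convergent and forces the residual quadratic terms to vanish in the limit, just as they do for the classical center of mass, yielding $c^{\alpha}(g)=c^{\alpha}_R(g)$.
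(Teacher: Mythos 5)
Your proposal assembles the right ingredients (the contracted Bianchi identity, the trace/tracefree splitting of $(\delta^g)^*X$, Lemma \ref{lemma2.2} identifying $\delta^eX$ with an element of $\ker(D\Scal)_e^*$, and the order counting under $\tau>\tfrac{n-2}{2}$), and you correctly flag where the difficulty lies. But the mechanism you propose for closing the argument does not close it, and the gap is exactly at the step you flag. Working on the annulus $\Omega_r=\{r_0\le|x|\le r\}$ with a \emph{fixed} inner radius $r_0$, both your Pohozaev-type identity and Michel's formula produce an inner boundary term at $S_{r_0}$ and bulk integrals of quadratic remainders over $A(r_0,r)$. These remainders ($\mathcal{Q}(e,g)$, $(\delta^gX-\delta^eX)\Scal^g$, $\langle \Ric^g-\tfrac1n\Scal^g g,W\rangle$, the volume discrepancy) are \emph{volume} densities, not surface densities on $S_r$: under $\tau>\tfrac{n-2}{2}$ they are integrable over the end, so their integrals over $A(r_0,\infty)$ converge to values that are in general nonzero. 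What your scheme therefore proves is only that $\lim_r\bigl[\int_{S_r}G^g(X,\nu)\,d\!\vol^g + \tfrac{n-2}{2}\int_{S_r}\mathbb{U}(1,g,e)\bigr]$ \emph{exists} and equals the same expression at $r_0$ plus a convergent tail integral; letting $r_0\to\infty$ in that relation is a tautology and does not show the limit vanishes. Equivalently: the linear-in-$(g-e)$ parts of the two boundary integrands do not cancel pointwise (one is second order in $g-e$, the other first order); the Bianchi identity only shows their difference of sphere integrals is \emph{independent of} $r$, not that it is zero.

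The paper supplies the missing idea by making the inner boundary move to infinity together with the outer one: for each $R$ it replaces $g$ by the interpolated metric $h=\chi_R\,g+(1-\chi_R)e$ on $A(\tfrac{R}{4},R)$, exactly Euclidean at the inner sphere. Then the inner boundary contributions vanish identically in both the Bianchi computation and Michel's formula, and every error term (the $\mathcal{Q}$-term, the replacement of $\delta^h,(\delta^h)_0^*,d\!\vol^h$ by their Euclidean counterparts, and the terms created by the cut-off) is integrated only over a region where it is uniformly $O(R^{-2\tau-2})$ in size, giving a total of $O(R^{n-2\tau-2})=o(1)$. This is the step that converts ``the difference converges'' into ``the difference is zero,'' and it is absent from your argument. (An equivalent repair in your framework would be to show that the $r$-independent linear-level constant vanishes by approximating $g-e$ by compactly supported tensors --- which is again a cut-off argument.) The same remark applies a fortiori to the center-of-mass case, where in addition the RT parity conditions must be invoked to control the extra factor of $r$ coming from $V^{(\alpha)}$ and $|X^{(\alpha)}|$, as you correctly anticipate.
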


\begin{proof} 
We shall give the complete proof for the mass only, the case of the center of mass being entirely similar.
Fix a chart at infinity on $M$. As the mass is defined asymptotically, we may freely replace a compact part in $M$
by a (topological) ball, which we shall decide to be the unit ball $B_0(1)$ in the chart at infinity. The manifold is 
unchanged outside that compact region. For any $R>>1$ we define a cut-off function $\chi_R$ which vanishes inside the
sphere of radius $\tfrac{R}{2}$, equals $1$ outside the sphere of radius $\tfrac{3R}{4}$ and moreover satisfies
$$ |\nabla\chi_R| \leq C_1 R^{-1}, \quad |\nabla^2\chi_R| \leq C_2 R^{-2}, \ \ \textrm{and } \,
|\nabla^3\chi_R| \leq C_3 R^{-3}$$
for some universal constants $C_i$ ($i=1,2,3$) not depending on $R$. We shall now denote $\chi=\chi_R$ unless some
confusion is about to occur. We then define a metric on the annulus $\Omega_R = A(\tfrac{R}{4},R)$:
$$h \ =  \ \chi g \, + \, (1-\chi) e , $$
and we shall also denote by $h$ the complete metric obtained by gluing the Euclidean metric inside the ball
$B_0(\tfrac{R}{4})$ and the original metric $g$ outside the ball $B_0(R)$.

Let now $X$ be a conformal Killing field for the Euclidean metric. Lemma~\ref{lemma2.2} tells us tha $V =  \delta^e X$ sits in the kernel of the adjoint of the linearized scalar curvature operator, \emph{i.e.} $(D\Scal)_e^*V=0$. We now compute as in Lemma \ref{lemma2.1} over the annulus $\Omega_R = A(\tfrac{R}{4},R)$:
\begin{equation*}
\int_{S_R} \left(\Ric^h - \frac12\Scal^h h\right)(X,\nu^h)
\ = \ \int_{\Omega_R} \langle \Ric^h - \frac12\Scal^h h\ , (\delta^h)^*X\rangle \ ,
\end{equation*}
where the volume forms and scalar products are all relative to $h$ but have been removed for clarity. (Notice that the boundary contribution at $\tfrac{R}{4}$ vanishes since $h$ is flat there).
We now split $(\delta^h)^*X$ into its trace part $-\tfrac1n(\delta^hX)h$ and its tracefree part $(\delta^h)_0^*X$ (where $(\delta^h)_0$ is as above the conformal Killing operator), so that
\begin{equation*}
\begin{split}
\int_{S_R} \left(\Ric^h - \frac12\Scal^h h\right)(X,\nu^h) \, = \, & -\frac1n \int_{\Omega_R} \!\!\tr_h\left(\Ric^h - \frac12\Scal^h h\right)\delta^h X \\ 
& \ \ \ \ \ \ \ + \int_{\Omega_R} \!\!\langle \Ric^h - \frac12\Scal^h h\, , (\delta^h)_0^*X\rangle .
\end{split}
\end{equation*}
Hence,
\begin{equation}\label{eq2.1}
\begin{split}
\int_{S_R} \left(\Ric^h - \frac12\Scal^h h\right)(X,\nu^h)
\ = \ & \frac{n-2}{2n} \int_{\Omega_R} (\delta^h X) \,\Scal^h  \\
& \ \ \ \ \ \ \ + \, \int_{\Omega_R} \langle \Ric^h - \frac12\Scal^h h,(\delta^h)_0^*X\rangle \ . 
\end{split}
\end{equation}
 
We now choose $X=r\partial_r$ (the radial dilation vector field), so that $\delta^eX=-n$ in this case. 
We can now replace the volume form $d\!\vol^h$, 
the divergence $\delta^h$, and the conformal Killing operator $(\delta^h)^*_0$ by their Euclidean counterparts 
$d\!\vol^e$, $\delta^e$, and $(\delta^e)^*_0$: indeed, from our asymptotic decay conditions, our choice of cut-off function $\chi$, 
and the facts that $\tau>\tfrac{n-2}{2}$ and $|X|=r$, one has for the first term in the right-hand side 
of \eqref{eq2.1}: 
$$ \int_{\Omega_R} (\delta^h X) \,\Scal^h d\!\vol^h \, - \, \int_{\Omega_R} (\delta^e X) \,\Scal^h d\!\vol^e
\, = \, O\left(R^{n-2\tau-2}\right) \, = \, o(1)$$  
as $R$ tends to infinity (note that the second term in the left-hand side does not tend alone to zero at infinity as
the scalar curvature of $h$ may not be uniformly integrable).
As $(\delta^e)_0^*X=0$, the last term in \eqref{eq2.1} can be treated in the same way and it is $o(1)$, too. 
One concludes that, in the case $X$ is the radial field,
\begin{equation}\label{eq2.2}
\int_{S_R} \left(\Ric^h - \frac12\Scal^h h\right)(X,\nu^h)\, d\!\vol^e_{S_R} 
\ = \ \frac{n-2}{2n} \int_{\Omega_R} (\delta^e X) \,\Scal^h d\!\vol^e
\, + \ o(1).
\end{equation}
We now argue as in Michel's analysis (see Equation \eqref{eqn.michel}) but over the annulus $\Omega_R$:
\begin{equation*}
%\begin{split}
\int_{\Omega_R} \!(\delta^e X)\Scal^h d\!\vol^e \, = \, \int_{S_R} \!\!\mathbb{U}(\delta^eX,g,e)\, + \int_{S_{\frac{R}{4}}} \!\!\!\mathbb{U}(\delta^eX,h,e) %\\
%& \ \ \ \ \ \ \ 
\, + \int_{\Omega_R} \!\!(\delta^eX)\mathcal{Q}(e,h)\, d\!\vol^e .
%\end{split}
\end{equation*}
But the boundary contribution at $r=\tfrac{R}{4}$ vanishes since $h=e$ there, and moreover, taking into account $\delta^eX=-n$, the assumptions on $\chi$, our asymptotic decay conditions, and $\tau>\tfrac{n-2}{2}$, the integral containing the $\mathcal{Q}$-term tends to $0$ when $R$ goes to infinity, for the very same reason that made it integrable in Michel's analysis. Thus, one gets
eventually:  
$$ \frac{1}{2(n-1)\omega_{n-1}}\,\int_{S_R} \left(\Ric^h - \frac12\Scal^h h\right)(r\partial_r,\nu^h)\, d\!\vol_{S_r}
\ = \ \frac{2-n}{2}\, m(g) \, + \ o(1),$$
and this proves the expected result.

If one now chooses $X=X^{(\alpha)} = r^2\partial_{\alpha} - 2 x^{\alpha}x^i\partial_i$, 
\emph{i.e.} $X$ is the essential conformal Killing field of $\mathbb{R}^n$ obtained by conjugating a translation by 
the inversion map, one has $\delta^ eX^{(\alpha)} = 2n x^{\alpha} = 2n V^{(\alpha)}$ and one can use the same argument.
Some careful bookkeeping shows that all appropriate terms are $o(1)$ due to the Regge-Teitelboim conditions and
one concludes that
$$ \frac{1}{2(n-1)\omega_{n-1}}\,\int_{S_r} \left(\Ric^h - \frac12\Scal^h h\right)(X^{(\alpha)},\nu^h) \, d\!\vol_{S_r}
\ = \ (n-2) \, m(g)\, c^{\alpha}(g) \, + \, o(1)$$
as expected.
\end{proof} 

\section{Asymptotically hyperbolic manifolds}

We now show that the same approach can be used to get analogous expressions in other
settings where asymptotic invariants have been defined. Looking back at
what has been done in the previous sections, we see that the proofs relied on the following two crucial facts: \begin{enumerate}
\item[(1)] the definition of the invariant should come (through Michel's analysis) from a Riemannian functional, which should in turn be related with some version of the Bianchi identity;
\item[(2)] there should exist some link between conformal Killing vectors and 
functions in the kernel of the adjoint linearized operator of the relevant Riemannian functional.
\end{enumerate}
In the presence of these two features, the equality between the classical definition of the invariants (\emph{\`a la} Michel) and their Ricci versions follows almost immediately, as the estimates necessary to cancel out all irrelevant terms are exactly the same as those used in the definition of the invariants, see for instance Equation \eqref{eq2.2} and the arguments surrounding it.

We insist on the fact that this idea is completely general and might be applied to a number of different geometric settings. As an example of this, we shall study the case of asymptotically hyperbolic
manifolds. The mass was defined there by P. T. Chru{\'s}ciel and the author 
\cite{ptc-mh} and independently by X. Wang \cite{wang_hyperbolic-mass}, see \cite{mh_survey-AH-mass} for a comparison.

\begin{defi}
An asymptotically hyperbolic manifold (with one end) is a complete Riemannian manifold $(M,g)$ such that there exists a diffeomorphism $\Phi$ 
(chart at infinity) from the complement of a compact set in $M$ into the complement of a ball in 
$\mathbb{H}^{n}$ (equipped with the background hyperbolic metric $b = dr^2 + \sinh^2 r g_{\mathbb{S}^{n-1}}$), satisfying the 
following condition: if $(\epsilon_0=\partial_r, \epsilon_1$, ..., $\epsilon_n)$ is a $b$-orthonormal basis, and 
$g_{ij} = g(\epsilon_{i},\epsilon_{j})$, there exists some $\tau>0$ such that,
$$ |g_{ij} - \delta_{ij} | = O(e^{-\tau r}), \quad |\epsilon_k\cdot g_{ij}| = O(e^{-\tau r}), 
\quad |\epsilon_k\cdot \epsilon_{\ell}\cdot g_{ij}| = O(e^{-\tau r}). $$
\end{defi}

\begin{defi}\label{defn3.2} 
If $\tau>\tfrac{n}{2}$ and $\left(\Scal^g + n(n-1)\right)$ is integrable in $L^1(e^rd\!\vol_b)$, 
the linear map $\mathbf{M}(g)$ defined by:
\begin{equation*}
\begin{split}
V \, \longmapsto \, & \frac{1}{2(n-1)\omega_{n-1}}\, \lim_{r\to\infty} \int_{S_r} 
\bigl[\, V \, (-\delta^b g - d \tr_b g) \\ 
&  \ \ \ \ \ \ \ \ \ \ \ \ \ \ \ \ \ \ \ \ \ \ \ \ \ \ \ \ \ \ \ \ \ \ + \tr_b(g-b) dV - (g-b)(\nabla^bV,\cdot)\, \bigr] (\nu)  \,d\!\vol_{s_r} 
\end{split}
\end{equation*}
is well-defined on the kernel of $(D\Scal)^*_b$ and is independent of the chart at infinity. It is called the 
\emph{mass of the asymptotically hyperbolic manifold} $(M,g)$.
\end{defi}

As in the asymptotically flat case, the normalization
factor comes from the computation for a reference family of metrics, which are the \emph{generalized Kottler
metrics} in the asymptotically hyperbolic case.

Existence and invariance of the mass can  be proven using Michel's approach \cite{michel_geometric-invariance}. 
The kernel $\mathcal{K} = \ker (D\Scal)^*_b$ is the space of functions $V$ solutions of $$\Hess^b V = V\, b.$$
It is $(n+1)$-dimensional and is generated, in the coordinates above, by the functions $V^{(0)}=\cosh r$, 
$V^{(\alpha)}=x^{\alpha}\sinh r$ (for $\alpha\in\{1,\dots,n\}$),
where $(x^{\alpha}) = (x^{1},\dots,x^n)$ are the Euclidean coordinates on the unit sphere induced by the standard embedding 
$\mathbb{S}^{n-1} \subset \mathbb{R}^n$. (An alternative definition of $\mathcal{K}$ is provided as follows: when the hyperbolic space is seen as the upper hyperboloid in Minkowski spacetime, then $\mathcal{K}$ is the set of functions generated by the 
restrictions of the Minkowskian coordinate functions to the hyperboloid.)

Contrarily to the asymptotically flat case, the center of mass is already included here and doesn't need to
be defined independently. Indeed, the space $\mathcal{K}$ is an irreducible representation of $O_0(n,1)$ (the isometry group
of the hyperbolic space), so that
all functions $V$ contribute to a single vector-valued invariant $\mathbf{M}(g)$. In the asymptotically flat case, 
the kernel $\mathcal{K}$ splits into a trivial $1$-dimensional representation 
(the constant functions) which gives rise to the mass, and the standard representation of $\mathbb{R}^n\rtimes O(n)$ on 
$\mathbb{R}^n$ (the linear functions), which gives birth to the (vector-valued) center of mass.
 
The hyperbolic conformal Killing fields are the same as those of the Euclidean space, but their divergences must now be 
explicited with respect to the hyperbolic metric. In the ball model of the hyperbolic space, one computes that 
$\delta^bX^{(0)} = -nV^{(0)}$ for the radial dilation vector field $X^{(0)}$, whereas $\delta^bX^{(\alpha)} = -nV^{(\alpha)}$
for the (inverted) translation fields. 

We can now argue as above, but starting with the modified Einstein tensor
$$\tilde{G}^g \ = \ \Ric^g \, - \, \frac12 \Scal^g g \, - \, \frac{(n-1)(n-2)}{2} g \ . $$
The Bianchi-like formula analogous to that of Lemma \ref{lemma2.1} reads, for any conformal
Killing field $X$,
$$
\int_{\partial \Omega} \tilde{G}^g(X,\nu)\, d\!\vol^g_{\partial \Omega} 
\ =  \ \frac{n-2}{2n} \int_{\Omega} \left(\Scal^g + \, n(n-1)\right)\,\delta^g X\, d\!\vol^g_{\Omega} \  , 
$$ 
and we note that the right-hand side is the expected expression to apply Michel's approach for the definition of the mass \cite{ptc-mh,michel_geometric-invariance}. 
The sequel of the proof is now completely similar to
the one given above. The very same arguments that provide convergence of the mass in Michel's approach show that all irrelevant contributions at infinity cancel out, so that, keeping the same notation as in the previous sections (the only difference being that polynomial decay estimates must be changed to exponential ones),
\begin{align*}
%\begin{split}
%\int_{S_R} \left(\Ric^h - \frac12\Scal^h h\right)(X,\nu^h) 
\int_{\Omega_R} (\delta^h X) \left(\Scal^h + n(n-1)\right) d\!\vol^h
& \ = \ \int_{\Omega_R} (\delta^b X) \left(\Scal^h + n(n-1)\right) d\!\vol^b \, + \, o(1)\\ 
& \ = \ \int_{S_R} \mathbb{U}(\delta^bX,g,b) 
%& \ \ \ \ \ \ \ \ \ + \, \frac{n-2}{2n} \int_{\Omega_R} \langle (D\Scal)_e^*(\delta^eX)\, ,\, h-e\rangle \, d\!\vol^e \\
% \\ & \ \ \ \ \ \ \ \ \
\, + \, o(1) .
%\end{split}
\end{align*}
The relation between the divergences of the conformal Killing vectors and the elements in the kernel of the adjoint linearized operator 
comes again from Lemma \ref{lemma2.2}, and one concludes as above with the following alternative definition of the mass 
involving the Ricci tensor:

\begin{theo}
For any $i\in\{0,...,n\}$, 
$$  
\ \mathbf{M}(g)\left[V^{(i)}\right] \ 
= - \frac{1}{n} \,\mathbf{M}(g)\left[\delta^bX^{(i)}\right] \ 
= \ - \frac{1}{(n-1)(n-2)\omega_{n-1}} \lim_{r\to\infty} \int_{S_r} \tilde{G}^g(X^{(i)},\nu)\, d\!\vol_{S_r}. 
$$
\end{theo}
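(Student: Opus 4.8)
The plan is to establish the two equalities separately, following the template of the asymptotically flat proof. The first equality is immediate from linearity: since $\mathbf{M}(g)$ is a linear functional on $\mathcal{K}=\ker(D\Scal)_b^*$ and the preceding discussion records $\delta^bX^{(i)}=-n\,V^{(i)}$ for every $i\in\{0,\dots,n\}$, one has $\mathbf{M}(g)[\delta^bX^{(i)}]=-n\,\mathbf{M}(g)[V^{(i)}]$, which is precisely $\mathbf{M}(g)[V^{(i)}]=-\tfrac1n\,\mathbf{M}(g)[\delta^bX^{(i)}]$. (That $V^{(i)}$ indeed lies in $\mathcal{K}$ is the content of Lemma~\ref{lemma2.2} applied with Einstein constant $-1$, since $\Ric^b=-(n-1)\,b$.) The substance of the theorem is the second equality, relating the boundary integral of $\tilde G^g(X^{(i)},\nu)$ to the mass functional.

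For the second equality I would reproduce the gluing construction of the previous section with the hyperbolic background $b$ in place of the Euclidean metric $e$. Fixing a chart at infinity, I cut off with a function $\chi_R$ and form $h=\chi_R\,g+(1-\chi_R)\,b$, so that $h=b$ on the inner sphere, $h=g$ on the outer sphere, and the transition is confined to an annulus $\Omega_R$. Running the contracted-Bianchi computation of Lemma~\ref{lemma2.1} for $h$ and $X^{(i)}$ over $\Omega_R$, and decomposing $(\delta^h)^*X^{(i)}$ into its tracefree part and its pure-trace part $-\tfrac1n(\delta^hX^{(i)})\,h$, produces
\[
\int_{S_R}\tilde G^h(X^{(i)},\nu^h)
=\frac{n-2}{2n}\int_{\Omega_R}\bigl(\Scal^h+n(n-1)\bigr)\,\delta^hX^{(i)}
+\int_{\Omega_R}\bigl\langle\tilde G^h,(\delta^h)_0^*X^{(i)}\bigr\rangle,
\]
with all metric operations taken with respect to $h$, where I have used $\tr_h\tilde G^h=-\tfrac{n-2}{2}\bigl(\Scal^h+n(n-1)\bigr)$ and where the inner boundary term drops out because $h=b$ there.

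I would then pass from $h$ to $b$ in every coefficient. As $X^{(i)}$ is conformal Killing for $b$ we have $(\delta^b)_0^*X^{(i)}=0$, so the tracefree integral is a pure error term; replacing $\delta^h$ and $d\!\vol^h$ by $\delta^b$ and $d\!\vol^b$ and using $\delta^bX^{(i)}=-n\,V^{(i)}$ converts the bulk term into $-\tfrac{n-2}{2}\int_{\Omega_R}\bigl(\Scal^h+n(n-1)\bigr)V^{(i)}\,d\!\vol^b+o(1)$. Finally I apply Michel's identity to the functional $\Scal$ with background $b$, for which $\Scal^b+n(n-1)=0$ and $V^{(i)}\in\ker(D\Scal)_b^*$: it expresses this bulk integral, up to an $o(1)$ remainder coming from the quadratic term $\mathcal{Q}(b,h)$, as $\int_{S_R}\mathbb{U}(V^{(i)},g,b)$, whose limit is $2(n-1)\omega_{n-1}\,\mathbf{M}(g)[V^{(i)}]$ by Definition~\ref{defn3.2}. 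Assembling the constants gives $\lim_{R\to\infty}\int_{S_R}\tilde G^g(X^{(i)},\nu)\,d\!\vol_{S_R}=-(n-1)(n-2)\omega_{n-1}\,\mathbf{M}(g)[V^{(i)}]$, which is the claim.

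The hard part will be the error control in this last passage, which is genuinely more delicate than in the flat case. The functions $V^{(i)}$ grow like $e^{r}$ (they are $\cosh r$ and $x^\alpha\sinh r$) while the background volume grows like $e^{(n-1)r}$, so the integrand $\bigl(\Scal^h+n(n-1)\bigr)V^{(i)}$ is integrable only against the exponential weight $e^{r}\,d\!\vol_b$; the hypothesis $\tau>\tfrac n2$, stronger than the flat threshold $\tfrac{n-2}{2}$, is precisely what forces the replacement errors and the $\mathcal{Q}$-remainder to decay against this weight. I would therefore have to check, using the decay $|g-b|=O(e^{-\tau r})$, the cut-off derivative bounds, and the $b$-growth of $X^{(i)}$ together with its covariant derivatives on $\Omega_R$, that each of the three error contributions---from $\delta^h$ versus $\delta^b$, from $d\!\vol^h$ versus $d\!\vol^b$, and from the tracefree term $(\delta^h)_0^*X^{(i)}$---is indeed $o(1)$ as $R\to\infty$.
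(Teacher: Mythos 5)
Your proposal is correct and follows exactly the route the paper intends: the paper's own proof of this theorem is only a sketch ("the argument is now completely similar to the one given above"), relying on the modified Einstein tensor $\tilde G^g$, the hyperbolic analogue of Lemma~\ref{lemma2.1}, the relations $\delta^bX^{(i)}=-nV^{(i)}$, and Michel's identity over the cut-off annulus, which is precisely what you carry out. You even supply more detail than the paper does (the trace computation for $\tilde G^h$ and the identification of where the stronger decay $\tau>\tfrac n2$ is needed), though like the paper you leave the final error estimates as claims to be checked.
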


\smallskip

{\flushleft\textbf{Acknowledgements}}

The author thanks Piotr Chru\'sciel and Julien Cortier for useful comments, and the referee for his very careful reading of the paper.

\bibliographystyle{smfplain}

\end{document}